\newtheorem{theorem}{Theorem}
\newtheorem{lemma}{Lemma}
\theoremstyle{remark}
\newtheorem*{remark}{\bf Remark}
\let\lf\lfloor
\let\rf\rfloor
\begin{document}

\title{Continued fractions of cubic irrationalities}

\author{Wadim Zudilin}
\address{Department of Mathematics, IMAPP, Radboud University, PO Box 9010, 6500~GL Nij\-me\-gen, Netherlands}
\urladdr{https://www.math.ru.nl/~wzudilin/}

\date{28 November 2023}

\subjclass[2020]{Primary 11A55; Secondary 11J68, 11J70, 11R16}

\begin{abstract}
We highlight some facts about continued fractions of real cubic irrationalities.
\end{abstract}

\maketitle

Given a real number $\alpha$, we follow the standard procedure to define its continued fraction.
Take $a_1=\lf\alpha\rf$ and, if $\alpha$ is not an integer, then write it in the form $\alpha=a_1+1/\alpha_2$, where $\alpha_2>1$ is again a real number.
Inductively, we choose $a_n=\lf\alpha_n\rf$ and $\alpha_n=a_n+1/\alpha_{n+1}$ with $\alpha_{n+1}>1$ if $\alpha_n$ is not an integer.
The procedure terminates at some step (that is, eventually we get an integer $\alpha_n=a_n>1$) if our starting number $\alpha$~is rational; otherwise the recursive algorithm produces an infinite continued fraction
\[
\alpha=[a_1,a_2,\dots,a_n,\dots]=\lim_{n\to\infty}[a_1,a_2,\dots,a_n]
\]
with an irrational $\alpha_n>1$ at each step.
Here and in what follows
\[
[a_1,a_2,\dots,a_n]
=a_1+\dfrac1{a_2+\dfrac1{a_3 +{\atop\ddots\,\displaystyle+\dfrac1{a_n}}}}
\]
is a finite continued fraction built on partial quotients $a_1,\dots,a_n$;
it can be given the form $[a_1,a_2,\dots,a_n]=p_n/q_n$, where
\begin{equation}
\begin{pmatrix} p_n & p_{n-1} \\ q_n & q_{n-1} \end{pmatrix}
=\gamma_n=\begin{pmatrix} a_1 & 1 \\ 1 & 0 \end{pmatrix}
\begin{pmatrix} a_2 & 1 \\ 1 & 0 \end{pmatrix}
\dotsb\begin{pmatrix} a_{n-1} & 1 \\ 1 & 0 \end{pmatrix}
\begin{pmatrix} a_n & 1 \\ 1 & 0 \end{pmatrix}
\label{eq-gamma}
\end{equation}
for $n\ge1$ and conventionally $p_0=1$, $q_0=0$ (see, for example, \cite[Chapter~4]{Zu23}).
The above procedure for an irrational $\alpha$ means that
\[
\alpha=[a_1,a_2,\dots,a_n,\alpha_{n+1}] \quad\text{for}\; n=0,1,2,\dots,
\]
with $a_n<\alpha_n<a_n+1$ for $n\ge1$ and $a_n\ge1$ for $n\ge2$;
the rational number $p_n/q_n=[a_1,a_2,\dots,a_n]$ is called the $n$th convergent of~$\alpha$, while $a_n$ is the $n$th partial quotient.
The related matrix $\gamma_n\in GL_2(\mathbb Z)$ in \eqref{eq-gamma} is called the $n$th continued transformation of the number~$\alpha$; then
\begin{equation}
\alpha=\gamma_n\alpha_{n+1}=\frac{p_n\alpha_{n+1}+p_{n-1}}{q_n\alpha_{n+1}+q_{n-1}},
\label{eq-lf1}
\end{equation}
where more generally for $\gamma=\big(\begin{smallmatrix} a & b \\ c & d \end{smallmatrix}\big)\in GL_2(\mathbb Z)$ and $t\in\mathbb R$ we denote by
\[
\gamma t=\frac{at+b}{ct+d}
\]
the $GL_2(\mathbb Z)$-action on $\mathbb R$ by the induced linear-fractional transformations.
In particular, all $\alpha=\alpha_1,\alpha_2,\dots,\alpha_n,\dots$ are equivalent (with respect to this action);
we can also invert the relation \eqref{eq-lf1}:
\begin{equation}
\alpha_{n+1}=\gamma_n^{-1}\alpha=-\frac{q_{n-1}\alpha-p_{n-1}}{q_n\alpha-p_n}.
\label{eq-lf2}
\end{equation}

If $\alpha\in\mathbb R$ is an algebraic number of degree $N$, then clearly all $\alpha_n$ are real in the field $K=\mathbb Q(\alpha)$. We will denote $\sigma_1(\beta),\dots,\sigma_{N-1}(\beta)$ the conjugates of $\beta\in K$ (different from it), and write
\[
f_\beta(x)=C_\beta(x-\beta)(x-\sigma_1(\beta))\dotsb(x-\sigma_{N-1}(\beta))\in\mathbb Z[x]
\]
for its minimal polynomial. We will call an algebraic number $\beta\in K$ a reduced algebraic irrationality if $\beta$~is real, $\beta>1$ and its conjugates $\sigma_j(\beta)$ for $j=1,\dots,N-1$ are all located in the disk $|z+\frac12|<\frac12$ (whose boundary is the circle placed on the diameter $[-1,0]$).
Notice that such a reduced irrationality $\beta$ must be an algebraic number of degree exactly $N$ over~$\mathbb Q$, as $\beta$ differs from all its conjugates.
In the case when $K$ is a totally real field, the condition on the conjugates means that $-1<\sigma_j(\beta)<0$ for $j=1,\dots,N-1$.

The following statement generalises \cite[Theorem~4.8]{Zu23} to the case of algebraic irrational $\alpha\in\mathbb R$.

\begin{theorem}
\label{th1}
Let $\alpha$ be a real algebraic number of degree $N>1$.
Then
\begin{enumerate}[(iii)]
\item[\textup{(i)}]
the number $\alpha_n$, where $n\ge1$, in the continued fraction
\[
\alpha=[a_1,\dots,a_{n-1},\alpha_n]
\]
has the same discriminant as~$\alpha$\textup;
\item[\textup{(ii)}]
if $\alpha$ is a reduced irrationality then $\alpha_n$ is reduced for any $n\ge1$ as well\textup;
and
\item[\textup{(iii)}]
if $\alpha$ is not necessarily reduced then $\alpha_n$ is reduced for all $n$ sufficiently large.
\end{enumerate}
\end{theorem}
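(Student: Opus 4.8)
\medskip
\noindent\textbf{Proof proposal.}
The plan rests on one structural observation, used throughout: for $n\ge1$ one has $\alpha_n=\gamma_{n-1}^{-1}\alpha$ (with $\gamma_0$ the identity matrix), where $\gamma_{n-1}\in GL_2(\mathbb Z)$; since $\gamma_{n-1}^{-1}$ has integer entries, every embedding of $\mathbb Q(\alpha)$ into $\mathbb C$ commutes with the linear-fractional action of $\gamma_{n-1}^{-1}$, so the roots of the minimal polynomial $f_{\alpha_n}$ are exactly the numbers $\gamma_{n-1}^{-1}\beta$ as $\beta$ runs over the roots of $f_\alpha$; in particular $\mathbb Q(\alpha_n)=\mathbb Q(\alpha)$ and $\alpha_n$ again has degree $N$. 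A second, elementary ingredient is the reformulation of the disk $D=\{z:|z+\tfrac12|<\tfrac12\}$: for $z\ne0$,
\[
z\in D\iff|z|^2+\operatorname{Re}z<0\iff\operatorname{Re}(1/z)<-1 .
\]

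For part~(i), write $\gamma_{n-1}^{-1}=\big(\begin{smallmatrix}a&b\\c&d\end{smallmatrix}\big)$, so $ad-bc=\pm1$, and put $g(x)=(a-cx)^N f_\alpha\big(\tfrac{dx-b}{a-cx}\big)$. Then $g\in\mathbb Z[x]$, $\deg g=N$ (the irreducible $f_\alpha$ has no rational root) and $g(\alpha_n)=0$; since $\deg g=N=\deg\alpha_n$ and $f_{\alpha_n}$ is primitive, $g=\lambda f_{\alpha_n}$ with $\lambda\in\mathbb Z$. Writing $r_1,\dots,r_N$ for the roots of $f_\alpha$ and $C_\alpha$ for its leading coefficient, the leading coefficient of $g$ is $C_\alpha\prod_i(cr_i+d)$, while $\gamma_{n-1}^{-1}r_i-\gamma_{n-1}^{-1}r_j=\dfrac{(ad-bc)(r_i-r_j)}{(cr_i+d)(cr_j+d)}$; hence the factors $\prod_i(cr_i+d)$ cancel from
\[
\operatorname{disc}(g)=\Big(C_\alpha\prod_i(cr_i+d)\Big)^{2N-2}\prod_{i<j}\big(\gamma_{n-1}^{-1}r_i-\gamma_{n-1}^{-1}r_j\big)^2 ,
\]
leaving $\operatorname{disc}(g)=(ad-bc)^{N(N-1)}\operatorname{disc}(f_\alpha)=\operatorname{disc}(f_\alpha)$ as $N(N-1)$ is even. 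Thus $\lambda^{2N-2}\operatorname{disc}(f_{\alpha_n})=\operatorname{disc}(f_\alpha)$; running the same computation with $\gamma_{n-1}$ in place of $\gamma_{n-1}^{-1}$ (it produces an integer polynomial vanishing at $\alpha=\gamma_{n-1}\alpha_n$, equal to $\mu f_\alpha$ for some $\mu\in\mathbb Z$) gives $\mu^{2N-2}\operatorname{disc}(f_\alpha)=\operatorname{disc}(f_{\alpha_n})$, and since the discriminants are nonzero (irreducibility) it follows that $(\lambda\mu)^{2N-2}=1$, so $\lambda=\pm1$ and $\operatorname{disc}(f_{\alpha_n})=\operatorname{disc}(f_\alpha)$.

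Parts~(ii) and~(iii) both reduce to tracking conjugates under the one-step map $t\mapsto1/(t-a)$ with $a=a_n=\lf\alpha_n\rf$. For~(ii) I induct on $n$: $\alpha_1=\alpha$ is reduced; and if $\alpha_n>1$ is reduced then $a_n\ge1$, $\alpha_{n+1}=1/(\alpha_n-a_n)$ is real, $>1$, of degree $N$, and its conjugates are the numbers $1/(\sigma_j(\alpha_n)-a_n)$; each has reciprocal $\sigma_j(\alpha_n)-a_n$ with $\operatorname{Re}(\sigma_j(\alpha_n)-a_n)=\operatorname{Re}(\sigma_j(\alpha_n))-a_n<-1$ (because $\operatorname{Re}(\sigma_j(\alpha_n))<0$, as $\sigma_j(\alpha_n)\in D$, and $a_n\ge1$), so by the displayed equivalence it lies in $D$; hence $\alpha_{n+1}$ is reduced. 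For~(iii), fix a conjugate $\theta=\sigma_j(\alpha)\ne\alpha$ and put $\theta_{n+1}=\gamma_n^{-1}\theta$, a conjugate of $\alpha_{n+1}$. By \eqref{eq-lf2} with $\theta$ in place of $\alpha$,
\[
\frac1{\theta_{n+1}}=-\frac{q_n\theta-p_n}{q_{n-1}\theta-p_{n-1}}=-\frac{q_n}{q_{n-1}}\cdot\frac{\theta-p_n/q_n}{\theta-p_{n-1}/q_{n-1}} ;
\]
since $p_n/q_n$ and $p_{n-1}/q_{n-1}$ tend to $\alpha\ne\theta$, with $|p_nq_{n-1}-p_{n-1}q_n|=1$ and $|\theta-p_{n-1}/q_{n-1}|\ge\delta>0$ for all large $n$ (uniformly over the finitely many conjugates $\theta$), the last fraction equals $1+O\big(1/(q_{n-1}q_n)\big)$; together with $q_n/q_{n-1}=a_n+q_{n-2}/q_{n-1}\ge1+1/q_{n-1}$ for $n\ge3$, this yields $\operatorname{Re}(1/\theta_{n+1})\le-q_n/q_{n-1}+O(1/q_{n-1}^2)<-1$ once $q_{n-1}$ is large enough. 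Hence every conjugate of $\alpha_{n+1}$ lies in $D$ for $n$ large; since also $\alpha_{n+1}>1$, the number $\alpha_{n+1}$ is reduced for all sufficiently large $n$.

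The one genuine difficulty I anticipate is the estimate in part~(iii): one has to make the comparison ``approximation error $\ll$ distance to $\partial D$'' uniform over all conjugates $\sigma_j(\alpha)$ and robust to the sizes of the partial quotients, the delicate case being $a_n=1$, where $q_n/q_{n-1}$ drops to $1+1/q_{n-1}$ and one needs the error to be $o(1/q_{n-1})$ --- precisely what the denominator growth $q_{n-1}\to\infty$ together with the separation $\theta\ne\alpha$ provide. A secondary point is verifying $\lambda=\pm1$ in part~(i), i.e.\ primitivity of the transformed polynomial, which I handle above through the symmetry between $\alpha$ and $\alpha_n$.
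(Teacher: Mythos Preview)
Your proof is correct and follows the same overall architecture as the paper's: $GL_2(\mathbb Z)$-invariance of the discriminant for~(i), induction via the one-step map $t\mapsto1/(t-a)$ for~(ii), and the explicit formula \eqref{eq-lf2} applied to the conjugates for~(iii). The execution, however, differs in two respects worth recording.

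First, for~(i) the paper simply asserts that the induced $GL_2(\mathbb Z)$-action on $\mathbb Z[x]$ preserves discriminants; you carry out the computation explicitly and, more to the point, you address the primitivity issue\,---\,that the transformed polynomial might a priori be an integer multiple $\lambda f_{\alpha_n}$ rather than $\pm f_{\alpha_n}$\,---\,via the neat symmetry argument $(\lambda\mu)^{2N-2}=1$. The paper leaves this implicit.

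Second, for~(ii) and~(iii) you work throughout with the equivalent description $z\in D\iff\operatorname{Re}(1/z)<-1$ of the disk, whereas the paper manipulates $|z+\tfrac12|$ directly. In~(ii) this is a wash: you check $\operatorname{Re}(\sigma_j(\alpha_n)-a_n)<-1$, while the paper computes the image of $D$ under $z\mapsto-1/(a-z)$ as a sub-disk of $D$. In~(iii) your device pays off: looking at $1/\theta_{n+1}=-\dfrac{q_n}{q_{n-1}}\bigl(1+O(1/(q_nq_{n-1}))\bigr)$ together with the uniform bound $q_n/q_{n-1}\ge1+1/q_{n-1}$ (valid for $n\ge3$) lets you conclude $\operatorname{Re}(1/\theta_{n+1})<-1$ in one stroke. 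The paper instead bounds $\bigl|\sigma_j(\alpha_{n+1})+\tfrac12\bigr|$ and is forced into a case split according to whether $a_n\ge2$ or $a_n=1$ in order to control $\bigl|1-2q_{n-1}/q_n\bigr|$. Your route is a bit cleaner here; the paper's makes the nested-disk geometry more visible.
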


\begin{proof}
(i) This follows from the fact that $GL_2(\mathbb Z)$-transformations
\[
\gamma=\begin{pmatrix} a & b \\ c & d \end{pmatrix}\colon\mathbb Z[x]\to\mathbb Z[x],
\quad
f(x)\mapsto(cx+d)^{\deg f(x)}f\bigg(\frac{ax+b}{cx+d}\bigg),
\]
preserve the discriminant.

(ii) Write $\alpha=a+1/\beta$ with the integer $a=\lf\alpha\rf$ and real $\beta>1$, so that the constraint $\beta>1$ is automatically met (even if $\alpha$~is not reduced).
If $\alpha$ is reduced then $a\ge1$; now $\beta=-1/(a-\alpha)$, hence
\[
\sigma_j(\beta)=-\frac1{a-\sigma_j(\alpha)} \quad\text{for}\; j=1,\dots,N-1.
\]
It remains to notice that the image of the disk $|z+\frac12|<\frac12$ under the (linear-fractional) map $z\mapsto-1/(a-z)$ is the disk whose boundary is the circle placed on the diameter $[-1/a,-1/(a+1)]\subset[-1,0)$, that is, the disk
\[
\bigg|z+\frac{2a+1}{2a(a+1)}\bigg|<\frac1{a(a+1)}.
\]
Such disks are part of $|z+\frac12|<\frac12$ for every $a=1,2,\dots$\,.

(iii) From \eqref{eq-lf2} we obtain
\[
\sigma_j(\alpha_{n+1})=-\frac{q_{n-1}\sigma_j(\alpha)-p_{n-1}}{q_n\sigma_j(\alpha)-p_n}
=-\frac{q_{n-1}}{q_n}\,\frac{\sigma_j(\alpha)-p_{n-1}/q_{n-1}}{\sigma_j(\alpha)-p_n/q_n}
\]
for each $j=1,\dots,N-1$.
Furthermore, from the standard properties of convergents we have
\[
\frac{\sigma_j(\alpha)-p_{n-1}/q_{n-1}}{\sigma_j(\alpha)-p_n/q_n}
=1+\frac{p_n/q_n-p_{n-1}/q_{n-1}}{\sigma_j(\alpha)-p_n/q_n}
=1+\frac{(-1)^n}{q_nq_{n-1}(\sigma_j(\alpha)-p_n/q_n)}
\]
implying that
\[
\sigma_j(\alpha_{n+1})+\frac12
=\frac12\bigg(1-2\frac{q_{n-1}}{q_n}+\frac{2\cdot(-1)^n}{q_nq_{n-1}(\sigma_j(\alpha)-p_n/q_n)}\bigg).
\]
Since $p_n/q_n\to\alpha$ as $n\to\infty$, for all $n$ sufficiently large we obtain
\[
\bigg|\sigma_j(\alpha_{n+1})+\frac12\bigg|
\le\frac12\bigg(\bigg|1-2\frac{q_{n-1}}{q_n}\bigg|+\frac{C}{q_nq_{n-1}}\bigg)
\]
with some absolute constant $C>0$.
If now $a_n\ge2$ then $q_n=a_nq_{n-1}+q_{n-2}>2q_{n-1}$ implying
\[
\bigg|1-2\frac{q_{n-1}}{q_n}\bigg|=1-2\frac{q_{n-1}}{q_n};
\]
otherwise $a_n=1$ results in $q_{n-1}=q_n-q_{n-2}$, hence
\[
\bigg|1-2\frac{q_{n-1}}{q_n}\bigg|=\bigg|1-2\frac{q_n-q_{n-2}}{q_n}\bigg|
=\bigg|-1+2\frac{q_{n-2}}{q_n}\bigg|=1-2\frac{q_{n-2}}{q_n}.
\]
In both situations we deduce that
\[
\bigg|1-2\frac{q_{n-1}}{q_n}\bigg|+\frac{C}{q_nq_{n-1}}
=1-2\frac{q_{n-k}}{q_n}+\frac{C}{q_nq_{n-1}}<1
\]
for $n$ sufficiently large, independent of whether $k=1$ or~$2$.
Thus, for all such $n$ we get
\[
\bigg|\sigma_j(\alpha_{n+1})+\frac12\bigg|<\frac12, \quad\text{where}\; j=1,\dots,N-1,
\]
completing the proof of the theorem.
\end{proof}

\begin{remark}
One can simplify the final part of the argument in the proof of claim~(iii) by observing that $a_n\ge2$ for some $n=n_0$.
This would imply that $\alpha_{n_0+1}$ is a reduced algebraic irrationality but then $\alpha_{n+1}$ is reduced for all $n\ge n_0$ by part~(ii) of the theorem.
\end{remark}

\begin{remark}
If $\alpha$ in Theorem~\ref{th2} has degree $m\ge2$ and $f_0(x)$ denotes its minimal polynomial, then the minimal polynomial $f_n(x)$ of $\alpha_{n+1}$ is up to sign
\[
(q_nx+q_{n-1})^mf_0\bigg(\frac{p_nx+p_{n-1}}{q_nx+q_{n-1}}\bigg)
\]
for $n=0,1,2,\dots$, so that the leading coefficient of it up to sign happens to be $q_n^mf_0(p_n/q_n)$.
When the latter is equal to $\pm1$ (which is an extremely rare event!) and $\alpha_{n+1}$ is reduced, then $\alpha_{n+1}$~is a Pisot number.
\end{remark}

\begin{theorem}
\label{th2}
Let $\alpha\in\mathbb R$ be a cubic irrationality and $\alpha_{n+1}$ the tails in its continued fraction $\alpha=[a_1,\dots,a_n,\alpha_{n+1}]$, where $n=0,1,2,\dots$\,.
Then for the conjugates $\sigma_1(\alpha_{n+1}),\sigma_2(\alpha_{n+1})$ we have the following limit relation\textup:
\begin{equation}
q_n^2\cdot|\sigma_1(\alpha_{n+1})-\sigma_2(\alpha_{n+1})|\to\beta=\bigg|\frac{\sigma_1(\alpha)-\sigma_2(\alpha)}{(\alpha-\sigma_1(\alpha))(\alpha-\sigma_2(\alpha))}\bigg| \quad\text{as}\; n\to\infty.
\label{eq-lim}
\end{equation}
\end{theorem}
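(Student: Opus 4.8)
The plan is to track the two conjugates $\sigma_1(\alpha_{n+1})$ and $\sigma_2(\alpha_{n+1})$ directly through the linear-fractional relation \eqref{eq-lf2}, just as was done for the disk estimates in the proof of Theorem~\ref{th1}(iii). First I would record that, since $\gamma_n^{-1}\in GL_2(\mathbb Z)$ has rational entries and $\mathbb Q(\alpha_{n+1})=\mathbb Q(\alpha)$, the transformation $\gamma_n^{-1}$ commutes with every field embedding of $\mathbb Q(\alpha)$ into $\mathbb C$; hence the conjugates of $\alpha_{n+1}$ are exactly $\gamma_n^{-1}\sigma_1(\alpha)$ and $\gamma_n^{-1}\sigma_2(\alpha)$, namely
\[
\sigma_j(\alpha_{n+1})=-\frac{q_{n-1}\sigma_j(\alpha)-p_{n-1}}{q_n\sigma_j(\alpha)-p_n}, \qquad j=1,2.
\]
These are genuinely the two distinct conjugates because $\gamma_n^{-1}$ is injective while $\alpha$, $\sigma_1(\alpha)$, $\sigma_2(\alpha)$ are pairwise distinct, $\alpha$ being cubic; and the denominators $q_n\sigma_j(\alpha)-p_n$ are nonzero for all large $n$ since $q_n\sigma_j(\alpha)-p_n=q_n(\sigma_j(\alpha)-p_n/q_n)$ with $p_n/q_n\to\alpha\ne\sigma_j(\alpha)$.

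Next I would subtract the two expressions over the common denominator $(q_n\sigma_1(\alpha)-p_n)(q_n\sigma_2(\alpha)-p_n)$ and simplify the numerator. The $\sigma_1\sigma_2$-cross terms cancel, and what survives is $(p_nq_{n-1}-p_{n-1}q_n)(\sigma_1(\alpha)-\sigma_2(\alpha))=(-1)^n(\sigma_1(\alpha)-\sigma_2(\alpha))$ by the determinant identity $\det\gamma_n=(-1)^n$. This gives the exact formula
\[
\sigma_1(\alpha_{n+1})-\sigma_2(\alpha_{n+1})=\frac{(-1)^n\bigl(\sigma_1(\alpha)-\sigma_2(\alpha)\bigr)}{(q_n\sigma_1(\alpha)-p_n)(q_n\sigma_2(\alpha)-p_n)},
\]
whence, absorbing $q_n^2$ into the denominator,
\[
q_n^2\cdot|\sigma_1(\alpha_{n+1})-\sigma_2(\alpha_{n+1})|=\frac{|\sigma_1(\alpha)-\sigma_2(\alpha)|}{|\sigma_1(\alpha)-p_n/q_n|\cdot|\sigma_2(\alpha)-p_n/q_n|}.
\]

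Finally I would let $n\to\infty$. Since $p_n/q_n\to\alpha$ and both $\sigma_1(\alpha)$ and $\sigma_2(\alpha)$ differ from $\alpha$, the denominator on the right converges to $|\sigma_1(\alpha)-\alpha|\cdot|\sigma_2(\alpha)-\alpha|\ne0$, and the desired limit \eqref{eq-lim} follows at once. There is essentially no obstacle here: the key identity is exact, so the limit is immediate. The only points demanding a little care are the justification that the $\gamma_n^{-1}$-images of the conjugates of $\alpha$ really are the conjugates of $\alpha_{n+1}$, and the bookkeeping of the sign $(-1)^n=\det\gamma_n$ in the telescoping numerator — both routine.
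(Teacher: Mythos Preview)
Your proposal is correct and follows essentially the same route as the paper: applying \eqref{eq-lf2} to the conjugates, subtracting over a common denominator, invoking $p_nq_{n-1}-p_{n-1}q_n=(-1)^n$, and letting $p_n/q_n\to\alpha$. You are a bit more explicit than the paper about the Galois compatibility of $\gamma_n^{-1}$ and the nonvanishing of the denominators, but the argument is otherwise identical.
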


\begin{proof}
We use \eqref{eq-lf2} to write
\begin{align*}
\sigma_2(\alpha_{n+1})-\sigma_1(\alpha_{n+1})
&=\frac{q_{n-1}\sigma_1(\alpha)-p_{n-1}}{q_n\sigma_1(\alpha)-p_n}-\frac{q_{n-1}\sigma_2(\alpha)-p_{n-1}}{q_n\sigma_2(\alpha)-p_n}
\\
&=\frac{(q_{n-1}\sigma_1(\alpha)-p_{n-1})(q_n\sigma_2(\alpha)-p_n)-(q_n\sigma_1(\alpha)-p_n)(q_{n-1}\sigma_2(\alpha)-p_{n-1})}{(q_n\sigma_1(\alpha)-p_n)(q_n\sigma_2(\alpha)-p_n)}
\\
&=\frac{(p_nq_{n-1}-p_{n-1}q_n)(\sigma_2(\alpha)-\sigma_1(\alpha))}{(q_n\sigma_1(\alpha)-p_n)(q_n\sigma_2(\alpha)-p_n)}
\\
&=\frac{(-1)^n(\sigma_2(\alpha)-\sigma_1(\alpha))}{q_n^2(\sigma_1(\alpha)-p_n/q_n)(\sigma_2(\alpha)-p_n/q_n)}.
\end{align*}
Recalling that $p_n/q_n\to\alpha$ as $n\to\infty$ leads to the required claim.
\end{proof}

\begin{remark}
The fact that $|\sigma_1(\alpha_{n+1})-\sigma_2(\alpha_{n+1})|<C/q_n^2$ for all $n$ is shown by A.~Sch\"onhage in the proof of Theorem~2 in~\cite{Sc06}.
\end{remark}

\begin{remark}
Theorem~\ref{th2} can be compared with the following result for real quadratic irrationalities (whose continued fractions are, of course, eventually periodic): the continued fraction expansions of a real quadratic irrationality and its Galois conjugate have essentially the same pre-periodic part.
The details can be found in \cite[Section~6]{Bu12}.
\end{remark}

In the setup of Theorem~\ref{th2}, let $f_n(x)$ denote the minimal polynomial of $\alpha_{n+1}$ for $n=0,1,2,\dots$\,.
By \eqref{eq-lf1} we have
\[
f_n(x)=(q_nx+q_{n-1})^3f_0\bigg(\frac{p_nx+p_{n-1}}{q_nx+q_{n-1}}\bigg),
\]
so that its leading coefficient $C_n$ is precisely $(-1)^nq_n^3f_0(p_n/q_n)$.
Using
\[
C_n^2|(\alpha_{n+1}-\sigma_1(\alpha_{n+1}))(\alpha_{n+1}-\sigma_2(\alpha_{n+1}))(\sigma_1(\alpha_{n+1})-\sigma_2(\alpha_{n+1}))|=|D(\alpha_{n+1})|^{1/2}=|D(\alpha)|^{1/2},
\]
where $D(\,\cdot\,)$ denotes the discriminant of algebraic number (that is, of its minimal polynomial in $\mathbb Z[x]$), we deduce from Theorem~\ref{th2} that
\[
|(\alpha_{n+1}-\sigma_1(\alpha_{n+1}))(\alpha_{n+1}-\sigma_2(\alpha_{n+1}))|\sim\frac{|D(\alpha)|^{1/2}}{\beta}\bigg(\frac{q_n}{C_n}\bigg)^2
\quad\text{as}\; n\to\infty,
\]
and even that
\begin{equation}
|\alpha_{n+1}-\sigma_j(\alpha_{n+1})|\sim
\frac{|D(\alpha)|^{1/4}}{\beta^{1/2}}\,\frac{q_n}{|C_n|}
=\frac{|D(\alpha)|^{1/4}}{\beta^{1/2}}\,\frac1{q_n^2|f_0(p_n/q_n)|}
\quad\text{as}\; n\to\infty,
\label{eq-asym}
\end{equation}
for $j=1,2$.
In a particular example of $\alpha\in\{2\cos\frac{2\pi}7,2\cos\frac{4\pi}7,2\cos\frac{6\pi}7\}$ being a root of the polynomial $x^3 + x^2 - 2x - 1$ of discriminant~49 (in this case $K=\mathbb Q(x_1)=\mathbb Q(x_2)=\mathbb Q(x_3)$ is cyclic totally real), the quantity $|D(\alpha)|^{1/4}/\beta^{1/2}=\sqrt{7/\beta}$ is, up to sign, a root of the cubic polynomial $x^3 - 7x^2 + 49$.
Performing a similar computation for $\alpha=\sqrt[3]{2}$ (whose discriminant is $-108$) brings in $\beta=1/\sqrt[6]{108}$ and $|D(\alpha)|^{1/4}/\beta^{1/2}=\sqrt[3]{108}$.

\medskip
The following fact is mentioned, without proof or reference, in \cite[Section~8]{Ch83}.
We include its derivation as well.

\begin{lemma}
\label{lemX}
Let $K=\mathbb Q(\beta)$ be a cubic field. Then every $\alpha\in K$ can written as
\[
\alpha=\frac{a\beta+b}{c\beta+d} \quad\text{with some}\; a,b,c,d\in\mathbb Z;
\]
furthermore, $\alpha\notin\mathbb Q$ if and only if $ad-bc\ne0$ in this representation.
\end{lemma}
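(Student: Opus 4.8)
The plan is to exploit a dimension count. Since $[K:\mathbb Q]=3$, the field $K$ is a $3$-dimensional $\mathbb Q$-vector space, while the set $V=\mathbb Q+\mathbb Q\beta$ of affine-linear expressions in $\beta$ is a $2$-dimensional subspace (as $1$ and $\beta$ are $\mathbb Q$-linearly independent). I would first dispose of $\alpha=0$ by writing it as $(0\cdot\beta+0)/(0\cdot\beta+1)$. For $\alpha\ne0$, multiplication by $\alpha$ is a $\mathbb Q$-linear automorphism of $K$, so $\alpha V$ is again a $2$-dimensional subspace; and two $2$-dimensional subspaces of a $3$-dimensional space must intersect in dimension at least $2+2-3=1$. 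I would then pick a nonzero $\eta\in V\cap\alpha V$, write $\eta=a\beta+b$ with $a,b\in\mathbb Q$ (since $\eta\in V$) and $\eta=\alpha(c\beta+d)$ with $c,d\in\mathbb Q$ (since $\eta\in\alpha V$), noting that $c\beta+d\ne0$ because $\eta\ne0$. This gives $\alpha=(a\beta+b)/(c\beta+d)$ with rational entries, and scaling numerator and denominator by a common denominator of $a,b,c,d$ makes them integral without changing the value of the fraction, which proves the first assertion.

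For the equivalence, I would argue as follows. Suppose $\alpha\in\mathbb Q$; then $\alpha(c\beta+d)=a\beta+b$ rearranges to $(\alpha c-a)\beta+(\alpha d-b)=0$ with rational coefficients, and since $\beta\notin\mathbb Q$ this forces $a=\alpha c$ and $b=\alpha d$, hence $ad-bc=\alpha cd-\alpha dc=0$. Conversely, suppose $ad-bc=0$ in a representation whose denominator $c\beta+d$ is nonzero, so that in particular $(c,d)\ne(0,0)$. Then the rows $(a,b)$ and $(c,d)$ are proportional over $\mathbb Q$: either $(a,b)=(0,0)$, whence $\alpha=0\in\mathbb Q$, or $(a,b)\ne(0,0)$, in which case $a\beta+b\ne0$ (again because $\beta\notin\mathbb Q$) and $(c,d)=\lambda(a,b)$ for some nonzero $\lambda\in\mathbb Q$, whence $\alpha=1/\lambda\in\mathbb Q$. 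In either case $\alpha\in\mathbb Q$, so $\alpha\notin\mathbb Q$ implies $ad-bc\ne0$.

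I do not expect any genuine obstacle: the entire content of the lemma is the inequality $\dim_{\mathbb Q}(V\cap\alpha V)\ge\dim_{\mathbb Q}V+\dim_{\mathbb Q}\alpha V-\dim_{\mathbb Q}K=1$, which is just the fact that a cubic field is ``only'' three-dimensional. The only places that demand a little care are checking that the denominator $c\beta+d$ produced by the intersection argument is actually nonzero (immediate, since $\eta\ne0$ and multiplication by $\alpha\ne0$ is injective) and treating $\alpha=0$ as a separate degenerate case. As an alternative to the subspace argument one could prove existence by a direct computation: write $\alpha=u+v\beta+w\beta^2$ in the power basis, impose $\alpha(c\beta+d)\in V$, reduce $\beta^3$ using the minimal polynomial of $\beta$, and solve the single resulting linear condition on $(c,d)$; but this is messier and rests on the same fact.
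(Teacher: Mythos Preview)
Your proof is correct. The dimension argument $\dim_{\mathbb Q}(V\cap\alpha V)\ge 2+2-3=1$ is watertight, the case $\alpha=0$ is handled, the denominator is shown to be nonzero, and your treatment of the equivalence $\alpha\notin\mathbb Q\iff ad-bc\ne0$ is complete in both directions (more so than the paper, which dismisses this as ``a simple observation'').

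The route, however, is genuinely different from the paper's. The paper does precisely the ``direct computation'' you sketch at the end as an alternative: it writes $\alpha=A_0+A_1\beta+A_2\beta^2$ and $\beta^3=B_0+B_1\beta+B_2\beta^2$, expands $(c\beta+d)\alpha$, reduces $\beta^3$, and reads off a homogeneous $3\times4$ linear system in $(a,b,c,d)$ over~$\mathbb Q$, which must have a nontrivial solution. Your argument repackages the same linear algebra one level up: instead of writing coordinates, you observe that two $2$-dimensional subspaces of a $3$-dimensional space meet nontrivially. This is cleaner and coordinate-free, and it makes transparent why the lemma is specific to cubic fields (in a field of degree $N\ge4$ the two planes $V$ and $\alpha V$ can be in general position). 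The paper's approach, in exchange, yields explicit formulas for $a,b,c,d$ in terms of the $A_i$ and $B_j$, should one want them.
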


\begin{proof}
Write $\alpha=A_0+A_1\beta+A_2\beta^2$ and $\beta^3=B_0+B_1\beta+B_2\beta^2$, where $A_0,\dots,B_2\in\mathbb Q$.
The question is to determine $a,b,c,d\in\mathbb Z$ such that $(a\beta+b)/(c\beta+d)=\alpha$, equivalently,
\begin{align*}
a\beta+b
&=(A_0+A_1\beta+A_2\beta^2)(c\beta+d)
\\
&=A_0d+(A_0c+A_1d)\beta+(A_1c+A_2d)\beta^2+A_2c\beta^3
\\
&=(A_0d+A_2B_0c)+(A_0c+A_1d+A_2B_1c)\beta+(A_1c+A_2d+A_2B_2c)\beta^2.
\end{align*}
This leads to the system of linear homogeneous equations
\[
\begin{pmatrix} 0 & -1 & A_2B_0 & A_0 \\ -1 & 0 & A_0+A_2B_1 & A_1 \\ 0 & 0 & A_1+A_2B_2 & A_2 \end{pmatrix}
\begin{pmatrix} a \\ b \\ c \\ d \end{pmatrix}
=\begin{pmatrix} 0 \\ 0 \\ 0 \end{pmatrix}
\]
with rational coefficients. Thus, a required solution in $a,b,c,d\in\mathbb Z$ exists when the rank of the $3\times4$ matrix is equal to~3, that is, when either $A_2$ or $A_1$ does not vanish.
When $A_1=A_2=0$, in other words, when $\alpha=A_0\in\mathbb Q$, the choice $a=c=0$ and $b/d=A_0$ makes the job.

The non-vanishing of $ad-bc$ in the case of $\alpha$ irrational is a simple observation.
\end{proof}

Now recall that a real number $\alpha$ is called badly approximable if $|\alpha-p/q|>C/q^2$ for some $C=C(\alpha)>0$ and all rational $p/q$; in other words, if
\begin{equation}
\Lambda=\Lambda(\alpha)=\liminf_{q\to\infty}\min_{p\in\mathbb Z}q|q\alpha-p|>0.
\label{eq-lam}
\end{equation}
If this is not the case, $\alpha$ is called well approximable.
The badly approximable numbers are precisely those with bounded partial quotients in their continued fraction.

The following fact is straightforward.

\begin{lemma}
\label{lemY}
Let $\alpha,\beta$ be real and
\[
\alpha=\frac{a\beta+b}{c\beta+d} \quad\text{with some}\; a,b,c,d\in\mathbb Z, \; ad-bc\ne0.
\]
Then $\alpha$ is badly approximable if and only if $\beta$~is.
\end{lemma}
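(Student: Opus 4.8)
The plan is to derive everything from a single line of algebra and then appeal to the definition \eqref{eq-lam} directly; this looks quicker than routing through the ``bounded partial quotients'' characterisation (which would need, e.g.\ Smith normal form to reduce to $GL_2(\mathbb Z)$-equivalence together with rational scaling). First I would note that the hypothesis is symmetric in $\alpha$ and $\beta$: solving $\alpha=(a\beta+b)/(c\beta+d)$ for $\beta$ gives $\beta=(d\alpha-b)/(-c\alpha+a)$, again with integer coefficients and with determinant $da-(-b)(-c)=ad-bc\ne0$. So it suffices to prove one implication, say that $\beta$ badly approximable forces $\alpha$ badly approximable. Since rational numbers are well approximable, we may assume $\beta$ (hence $\alpha$, by the inverted relation) is irrational, so in particular $c\beta+d\ne0$. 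The key identity is: for any $P,Q\in\mathbb Z$, setting $p=dP-bQ$ and $q=aQ-cP$,
\[
Q\alpha-P=\frac{Q(a\beta+b)-P(c\beta+d)}{c\beta+d}=\frac{(aQ-cP)\beta-(dP-bQ)}{c\beta+d}=\frac{q\beta-p}{c\beta+d}.
\]

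Next, fix $Q\ge1$ and let $P=P(Q)$ be a nearest integer to $Q\alpha$, so that $|Q\alpha-P|=\min_{P'\in\mathbb Z}|Q\alpha-P'|\le\frac12$ and $|P|\le(|\alpha|+1)Q$. Put $q=q(Q)=aQ-cP$. Then $|q|\le MQ$ with $M=|a|+|c|(|\alpha|+1)$; moreover $q/Q=a-cP/Q\to a-c\alpha=(ad-bc)/(c\beta+d)\ne0$, so $|q(Q)|\to\infty$ as $Q\to\infty$ (and $q(Q)\ne0$ for $Q$ large). Using the identity above and $|q\beta-p|\ge\min_{p'\in\mathbb Z}|q\beta-p'|$,
\[
Q\,|Q\alpha-P|=\frac{Q}{|q|\,|c\beta+d|}\cdot|q|\,|q\beta-p|\ge\frac1{M\,|c\beta+d|}\cdot|q|\min_{p'\in\mathbb Z}|q\beta-p'|.
\]
Taking $\liminf_{Q\to\infty}$, the left-hand side has $\liminf$ equal to $\Lambda(\alpha)$ by \eqref{eq-lam}, while on the right-hand side, because $|q(Q)|$ runs through positive integers tending to $\infty$, the $\liminf$ is at least $\Lambda(\beta)/(M\,|c\beta+d|)$. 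Hence $\Lambda(\alpha)\ge\Lambda(\beta)/(M\,|c\beta+d|)>0$, so $\alpha$ is badly approximable; the converse then follows by applying this to the inverted relation $\beta=(d\alpha-b)/(a-c\alpha)$.

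The only point that needs a little care — which is presumably why the statement is ``straightforward'' — is the two-sided comparison $|q(Q)|\asymp Q$: the upper bound $|q(Q)|\le MQ$ makes $Q/|q|$ a positive constant, and the lower bound (equivalently $|q(Q)|\to\infty$, valid because $a-c\alpha\ne0$) is what lets us replace $|q|\min_{p'}|q\beta-p'|$ by its $\liminf$, namely $\Lambda(\beta)$, together with the elementary remark that the $\liminf$ of a quantity indexed by integers tending to infinity is bounded below by the full $\liminf$ over all integers. Everything else is the single displayed identity.
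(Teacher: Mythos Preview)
Your argument is correct and follows the same route as the paper's proof: you transform a rational approximation $P/Q$ to $\alpha$ into a rational approximation $p/q$ to $\beta$ via the identity $Q\alpha-P=(q\beta-p)/(c\beta+d)$ and exploit the symmetry $ad-bc\ne0$ to swap roles. The paper states this as a one-line sketch (``best rational approximations $p/q$ to $\beta$ correspond to best approximations $(ap+bq)/(cp+dq)$ to~$\alpha$''), while you have supplied the quantitative details, including the care point $|q(Q)|\asymp Q$ that makes the $\liminf$ comparison go through; the paper's subsequent remark carries out a closely related quantitative estimate.
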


\begin{proof}
This follows from the fact that best rational approximations $p/q$ to $\beta$ with $|q|$ sufficiently large correspond to best approximations $(ap+bq)/(cp+dq)$ to~$\alpha$.
Furthermore, the condition $ad-bc\ne0$ implies that the roles of $\alpha$ and $\beta$ can be swapped.
\end{proof}

\begin{remark}
It is also possible to quantify the result of Lemma~\ref{lemY} for badly approximable $\alpha$ (and~$\beta$) in terms of the characteristic~\eqref{eq-lam}.
If $\{p/q\}=\{p_n/q_n\}$ is a sequence of rational approximations to $\alpha$ that realises the limit inferior in~\eqref{eq-lam}, that is, for which $|\alpha-p/q|\sim\Lambda(\alpha)/q^2$ as $q\to\infty$, then
\[
\bigg|q\,\frac{a\beta+b}{c\beta+d}-p\bigg|=|q\alpha-p|\sim\frac{\Lambda(\alpha)}{q},
\]
hence
\[
|q(a\beta+b)-p(c\beta+d)|=|(aq-cp)\beta-(dp-bq)|\sim\Lambda(\alpha)\bigg|\frac{c\beta+d}{q}\bigg|.
\]
On the other hand,
\[
\beta=\frac{d\alpha-b}{-c\alpha+a}\sim\frac{dp-bq}{-cp+aq},
\]
so that
\[
\bigg|\frac{c\beta+d}{q}\bigg|\sim\frac{|ad-bc|}{|aq-cp|}
\]
and we end up with
\[
\bigg|\beta-\frac{dp-bq}{aq-cp}\bigg|\sim\Lambda(\alpha)\frac{|ad-bc|}{(aq-cp)^2}.
\]
This inequality implies that $\Lambda(\beta)\le|ad-bc|\Lambda(\alpha)$; from the symmetry of $\alpha$ and $\beta$ we also get $\Lambda(\alpha)\le|ad-bc|\Lambda(\beta)$.
Not surprisingly the condition $|ad-bc|=1$ results in the equality $\Lambda(\beta)=\Lambda(\alpha)$; the numbers $\alpha$ and $\beta$ are $GL_2(\mathbb Z)$-equivalent in this case, hence they share the same continued-fraction tails by the classical theorem of Serret \cite[Theorem~4.6]{Zu23}.
\end{remark}

As a consequence of Lemmas~\ref{lemX} and \ref{lemY} we obtain the following observation.

\begin{theorem}
\label{th3}
Let $\alpha$ be a real cubic irrationality and $K=\mathbb Q(\alpha)$.
Then an irrational $\beta\in K$~is badly approximable if and only if $\alpha$~is.

In other words, if the partial quotients of \emph{some} real number from a cubic field are unbounded, then this is true for \emph{any} real irrational number from the field.
\end{theorem}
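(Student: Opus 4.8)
The plan is to obtain this as an immediate consequence of Lemmas~\ref{lemX} and~\ref{lemY}, once we record one elementary fact: any irrational $\beta\in K$ already generates~$K$. Indeed, $\mathbb Q(\beta)$ is a subfield of $K$, so $[\mathbb Q(\beta):\mathbb Q]$ divides $[K:\mathbb Q]=3$; since $\beta\notin\mathbb Q$ this degree is not~$1$, hence $\mathbb Q(\beta)=K=\mathbb Q(\alpha)$.

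With this in hand, I would first apply Lemma~\ref{lemX} with $\alpha$ playing the role of the generator and $\beta$ the role of the element to be represented: this produces integers $a,b,c,d$ with
\[
\beta=\frac{a\alpha+b}{c\alpha+d},
\]
and since $\beta$ is irrational the same lemma gives $ad-bc\ne0$. Then Lemma~\ref{lemY}, applied to this relation between the two real numbers $\alpha$ and $\beta$, yields at once that $\beta$ is badly approximable precisely when $\alpha$~is. The ``in other words'' reformulation follows because being badly approximable is the same as having bounded partial quotients (as recalled in the discussion of~\eqref{eq-lam}): if the partial quotients of one real irrational element of $K$ are unbounded, that element is well approximable, hence so is $\alpha$, hence so is every other real irrational element of~$K$, i.e.\ all of them have unbounded partial quotients.

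There is no serious obstacle in this argument; the only point that should be stated explicitly, rather than silently assumed, is that $\beta$ generates~$K$, since both lemmas are phrased in terms of a cubic field presented together with a generating element, and Lemma~\ref{lemX} in particular requires $K=\mathbb Q(\alpha)$ in order to express $\beta$ as a linear-fractional function of~$\alpha$ with integer coefficients.
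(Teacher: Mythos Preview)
Your proof is correct and follows exactly the paper's route: the theorem is recorded there simply as a consequence of Lemmas~\ref{lemX} and~\ref{lemY}. One small remark: the extra observation that $\beta$ generates $K$ is true but not actually needed---Lemma~\ref{lemX} only requires that the \emph{generator} (here $\alpha$, given by hypothesis) generate $K$, and Lemma~\ref{lemY} is already an ``if and only if'' for arbitrary real numbers, so no swap of roles is required.
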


By the asymptotics in \eqref{eq-asym} the unboundedness of partial quotients follows from
\[
\liminf_{q\to\infty}\min_{p\in\mathbb Z}q^2f_0(p/q)=0,
\]
where $f_0(x)\in\mathbb Z[x]$ is the minimal polynomial of~$\alpha$.
However the latter criterion of well-approximability is classically known from the work of Thue and Siegel.

Finally notice that Theorems~\ref{th2} and \ref{th3} do not extend to (generic) algebraic irrationalities of degree higher than~3.

\medskip\noindent
\textbf{Acknowledgements.}
I thank Yann Bugeaud, Alan Haynes and Ariyan Javanpeykar for inspiring discussions, corrections and numerous comments incorporated as remarks in the present text.


\begin{thebibliography}{9}

\bibitem{Bu12}
\textsc{Y. Bugeaud},
Continued fractions with low complexity: transcendence measures and quadratic approximation,
\emph{Compos. Math.} \textbf{148} (2012), no.~3, 718--750.

\bibitem{Ch83}
\textsc{G.\,V. Chudnovsky},
On the method of Thue--Siegel,
\emph{Ann. of Math.} \textbf{117} (1983), 325--382.

\bibitem{Sc06}
\textsc{A. Sch\"onhage},
Polynomial root separation examples,
\emph{J. Symb. Comput.} \textbf{41} (2006), 1080--1090.

\bibitem{Zu23}
\textsc{W. Zudilin},
\emph{Analytic methods in number theory: When complex numbers count},
Monographs in Number Theory \textbf{11} (World Sci. Publ. Co. Pte. Ltd., Hackensack, NJ, 2023).

\end{thebibliography}
\end{document}